\DeclareMathOperator{\Tr}{Tr}
\newtheorem{thm}{Theorem}[]
\newtheorem{cor}[thm]{Corollary}
\newtheorem{prop}[thm]{Proposition}
\newtheorem{defin}[thm]{Definition}
\newtheorem{expl}[thm]{Example}
\newtheorem*{question}{Question}
\newcommand{\reals}{\mathbb{R}}
\begin{document}



\bibliographystyle{plain}
\title{
On the maximal angle between copositive matrices
}
\author{
Felix Goldberg\thanks{Department of Mathematics, Technion-IIT, Technion City, Haifa 32000, Israel
(felix.goldberg@gmail.com).}
\and
Naomi Shaked-Monderer\thanks{The Max Stern Yezreel Valley College,  Yezreel Valley 19300, Israel (nomi@tx.technion.ac.il). This work was supported by grant no.\ G-18-304.2/2011
by the German-Israeli Foundation for Scientific Research and Development (GIF).}
}

\pagestyle{myheadings}
\markboth{F. \ Goldberg and N. \ Shaked-Monderer}{On the maximal angle between copositive matrices}
\maketitle

\begin{abstract}
Hiriart-Urruty and Seeger have posed the problem of finding the maximal possible angle $\theta_{\max}(\mathcal{C}_{n})$ between two copositive matrices of order $n$. They have
proved that $\theta_{\max}(\mathcal{C}_{2})=\frac{3}{4}\pi$ and conjectured that $\theta_{\max}(\mathcal{C}_{n})$ is equal to $\frac{3}{4}\pi$ for all $n \geq 2$. In this note
we disprove their conjecture by showing that $\lim_{n \rightarrow \infty}{\theta_{\max}(\mathcal{C}_{n})}=\pi$. Our proof uses a construction from algebraic graph theory. We
also consider the related problem of finding the maximal angle between a nonnegative matrix and a positive semidefinite matrix of the same order.
\end{abstract}

\begin{keywords}
copositive matrix, convex cone, critical angle, strongly regular graph, symmetric nonnegative inverse eigenvalue problem
\end{keywords}
\begin{AMS}
15A48,52A40,05E30.
\end{AMS}

\section{Introduction}

A matrix $A$ is called \emph{copositive} if $x^{T}Ax \geq 0$ for every vector $x \geq 0$. The set of $n\times n$ copositive matrices $\mathcal{C}_{n}$  is a closed convex cone in
the space $\mathcal{S}_n$ of $n\times n$ symmetric matrices.  By the definition, the cone  $\mathcal{C}_{n}$ includes as subsets the cone $\mathcal{P}_{n}$ of positive
semidefinite matrices and the cone $\mathcal{N}_{n}$ of symmetric nonnegative matrices of order $n$. Therefore, it is easy to see that $\mathcal{P}_{n}+\mathcal{N}_{n} \subseteq
\mathcal{C}_{n}$.

In \cite{Dia62} Diananda proved that for $n \leq 4$ this set inclusion is in fact an equality, and also cited an example due to A. Horn that shows that for $n \geq 5$ there are
copositive matrices which cannot be decomposed as a sum of a positive semidefinite and a nonnegative matrix  (see also \cite[p. 597]{HirSee10}). In a remarkable recent paper
\cite{Hil12} Hildebrand has described all extreme rays of $\mathcal{C}_{5}$, but very little is known about the structure of $\mathcal{C}_{n}$ for $n \geq 6$.

Understanding the structure of this cone is important, among other reasons, since many combinatorial and nonconvex quadratic optimization problems can be  equivalently
reformulated
as linear problems over the cone $\mathcal{C}_n$ or its dual, the cone $\mathcal{C}^*_{n}$ of $n\times n$ \emph{completely positive} matrices (i.e., matrices $A$ that possess a
factorization $A=BB^T$, where $B\geq 0$).
For more information about copositive matrices and copositive optimization we refer the reader to the recent surveys \cite{HirSee10,Dur10,BomSchuch12} and the  references
therein.

This paper is dedicated to the solution of a problem posed by Hiriart-Urruty and Seeger in their survey \cite{HirSee10}:

\emph{What is the greatest possible angle between two matrices in $\mathcal{C}_{n}$?}

The angle between vectors $u,v$ in an inner product space $V$ is:
$$
\angle(u,v)=\arccos{\frac{\langle u,v \rangle}{||u||\cdot||v||}}.
$$
Given a convex cone $K\subseteq V$, the maximal angle attained between two vectors in the cone $K$ is denoted $\theta_{\max}(K)$, and a
pair of vectors attaining this angle is called \emph{antipodal}. For the study of maximal angles of cones
we refer to \cite{IusSee05,IusSee09}.

Here we consider $V=\mathcal{S}_n$, with the standard inner product
$$\langle A,B\rangle=\Tr{AB}$$
and the norm associated with it, that is the Frobenius norm $||A||=\sqrt{\sum_{i,j=1}^{n}{|a_{ij}|^{2}}}$.

In \cite{HirSee10} it was shown that $\theta_{\max}(\mathcal{C}_{2})=\frac{3}{4} \pi$ and the unique pair of $2 \times 2$ matrices (up to multiplication by a positive scalar)
that attains this angle was found. Furthermore, in \cite[Remark 6.18]{HirSee10} a somewhat hesitant conjecture was made to the effect that
$\theta_{\max}(\mathcal{C}_{n})=\frac{3}{4} \pi$ for all $n \geq 2$.

We show in this note that the authors of \cite{HirSee10} were rightly apprehensive about the said conjecture, and that the correct asymptotic answer to their problem is:
$$
\lim_{n \rightarrow \infty}{\theta_{\max}(\mathcal{C}_{n})}=\pi.
$$
Note that the cone $\mathcal{C}_{n}$ is \emph{pointed}, i.e., $\mathcal{C}_{n}\cap (-\mathcal{C}_{n})=\{0\}$ \cite[Proposition 1.2]{HirSee10},
and thus clearly $\theta_{\max}(\mathcal{C}_{n})<\pi$ for every $n$.

For the proof, we consider the maximal angle between a positive semidefinite matrix and a nonnegative matrix of the same order $n$. Let us denote this maximal angle by
$\gamma_n$, i.e.,
\[\gamma_n= \max_{0\ne X\in \mathcal{P}_{n} \atop 0\ne Y\in\mathcal{N}_{n}}  \angle(X,Y) =  \max_{ X\in \mathcal{P}_{n}, Y\in\mathcal{N}_{n}\atop ||X||=||Y||=1}\arccos\langle
X,Y\rangle .\]
This maximum exists, since both $\mathcal{N}_{n}$ and $\mathcal{P}_n$ are closed and their intersection with the unit sphere is compact. Then by the inclusion
$\mathcal{P}_{n}+\mathcal{N}_{n} \subseteq \mathcal{C}_{n}$ we have
$$ \gamma_n\le \theta_{\max}(\mathcal{P}_{n}+\mathcal{N}_{n})\le \theta_{\max}(\mathcal{C}_{n}).$$

We prove our result on $\theta_{\max}(\mathcal{C}_{n})$ by establishing
\begin{thm}\label{thm:main}
$$\lim_{n \rightarrow \infty}\gamma_n=
\lim_{n \rightarrow \infty}{\theta_{\max}(\mathcal{C}_{n})}=\pi.$$
\end{thm}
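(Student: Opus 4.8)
The plan is to prove the single asymptotic statement $\lim_{n\to\infty}\gamma_n=\pi$, which by the chain $\gamma_n\le\theta_{\max}(\mathcal C_n)<\pi$ already displayed in the excerpt forces $\theta_{\max}(\mathcal C_n)\to\pi$ as well. First I would record that both $\gamma_n$ and $\theta_{\max}(\mathcal C_n)$ are non-decreasing in $n$: padding a positive semidefinite (resp.\ nonnegative, resp.\ copositive) matrix with a zero row and column preserves the defining property, the inner products, and the Frobenius norms, hence the angles. Consequently it suffices to exhibit, for an unbounded set of orders $n$, a nonzero $X\in\mathcal P_n$ and a nonzero $Y\in\mathcal N_n$ with $\langle X,Y\rangle/(\|X\|\,\|Y\|)\to-1$; monotonicity then upgrades this subsequential statement to the full limit.

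Next I would reduce the problem to a purely spectral one. Fix a symmetric nonnegative $Y$ with spectral decomposition $Y=\sum_i\theta_iu_iu_i^{T}$, and let $X\succeq 0$. Setting $d_i=u_i^{T}Xu_i\ge 0$ one has $\langle X,Y\rangle=\sum_i\theta_id_i$ and $\|X\|^2\ge\sum_id_i^2$, so that $\langle X,Y\rangle/\|X\|\ge-\big(\sum_{\theta_i<0}\theta_i^2\big)^{1/2}$, with equality for $X=\sum_{\theta_i<0}|\theta_i|\,u_iu_i^{T}$. Writing $N=\sum_{\theta_i<0}\theta_i^2$ and $P=\sum_{\theta_i>0}\theta_i^2$ (so $N+P=\|Y\|^2$), the best angle achievable against a given $Y$ is $\arccos\!\big(-\sqrt{N/(N+P)}\big)$. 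Since $Y\ge0$ forces a nonnegative trace, which pushes spectral mass to the positive side and inflates $P$, it is natural to restrict to matrices with $\Tr Y=0$, i.e.\ to adjacency matrices of graphs. The target then becomes: \emph{find graphs whose positive eigenvalues carry a vanishing fraction of the Frobenius norm}, $P=o(\|Y\|^2)$.

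I would realize this with strongly regular graphs, where the relevant spectral projector is explicit. For an SRG on $n$ vertices with eigenvalues $k>r\ge 0>s$ of multiplicities $1,f,g$, take $Y=A$ (nonnegative, zero diagonal) and $X=E_s$, the orthogonal projector onto the $s$-eigenspace (hence PSD). Then $\langle X,Y\rangle=\Tr(E_sA)=sg$, $\|X\|=\sqrt g$, and $\|Y\|=\sqrt{nk}$, so $\cos\angle(X,Y)=-\sqrt{s^2g/(nk)}$; as $nk=k^2+fr^2+gs^2$, this tends to $-1$ exactly when $k^2+fr^2=o(nk)$. The remaining and genuinely delicate step is to produce an infinite family meeting this condition: dense graphs fail because the Perron term $k^2$ already has order $nk$, while the usual high-multiplicity families (triangular, lattice, conference graphs) fail because $fr^2$ stays of order $nk$. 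The point graphs of the generalized quadrangles $\mathrm{GQ}(q,q^2)$, which exist for every prime power $q$, thread the needle: here $n\sim q^4$, $k\sim q^3$, $r\sim q$, $|s|\sim q^2$, with $f\sim q^4$ and $g\sim q^3$, whence $k^2+fr^2\sim q^6=o(q^7)=o(nk)$ and $\cos\angle=-\sqrt{1-O(1/q)}\to-1$.

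The main obstacle is precisely this last construction. The reduction to ``a nonnegative matrix whose squared spectrum is asymptotically all negative'' is clean, but realizing such a spectrum is an instance of the symmetric nonnegative inverse eigenvalue problem, and the Perron--Frobenius constraint $k\ge|s|$, together with the trace identity $k+fr+gs=0$, makes the two error terms $k^2$ and $fr^2$ pull in opposite directions as one varies the density of the graph. Verifying that $\mathrm{GQ}(q,q^2)$ balances them---small enough $r$, small enough $g$, and $|s|$ comparable to the square root of $nk/g$---is the crux, and I would devote the bulk of the write-up to computing these parameters and confirming the limit.
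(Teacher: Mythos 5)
Your proposal is correct and follows essentially the same route as the paper: monotonicity of $\gamma_n$ via zero-padding, the reduction showing the optimal positive semidefinite partner of a nonnegative $Y$ is (a positive multiple of) its negative definite part with $\cos$-value $-\sqrt{N/(N+P)}$ (the paper's Proposition \ref{prop:A,P} and equation \eqref{eq:srg}), and the collinearity graphs of the classical generalized quadrangles $GQ(q,q^2)$ for prime powers $q$, whose parameters give $\cos\angle = -\sqrt{q^2+1}/(q+1)\to -1$ exactly as in \eqref{eq:monty}. Your only deviations are cosmetic: you take the spectral projector $E_s$ rather than $|\tau|E_\tau$ (a positive scalar multiple, hence the same angle), and you prove the optimality reduction directly via diagonal entries in an eigenbasis plus Cauchy--Schwarz rather than via the paper's two-step estimate.
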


This is achieved by constructing a sequence of pairs $(P_{k},N_{k})$,  $P_k\in \mathcal{P}_{n_k}$  and $N_k\in \mathcal{N}_{n_k}$, where the orders $n_k$ tend  to infinity and
such that $\angle(P_{k},N_{k}) \rightarrow \pi$. Note that $\{\gamma_n\}$ is a non-decreasing sequence, since the angle between $N\in \mathcal{N}_n$ and $P\in \mathcal{P}_n$ is
equal to the angle between $N\oplus 0\in \mathcal{N}_{n+1}$ and $P\oplus 0\in \mathcal{P}_{n+1}$.

As the problem of calculating or estimating $\gamma_n$ is interesting in its own right, we start in Section \ref{sec:gamma_n} with some initial results on this problem, finding
$\gamma_{3}$ and $\gamma_{4}$. Though the geometry of the cones $\mathcal{P}_n$ and $\mathcal{N}_n$ is much better understood that that of $\mathcal{C}_n$, calculating
$\gamma_{n}$ is a very difficult task for $n \geq 5$. We will offer an explanation for this phenomenon by showing that the determination of $\gamma_{n}$ is closely related to the
symmetric nonnegative inverse eigenvalue problem (SNIEP). Details on SNIEP and related problems can be found in \cite{Avi} and the references of \cite{Spector}.

The main result is stated and proved in Section \ref{sec:proof} by a construction based on algebraic graph theory. The interceding Sections
\ref{sec:srg}-\ref{sec:GQ} are devoted to the introduction of the relevant tools from this theory, in order to keep this note self-contained, albeit tersely so. We conclude in
Section \ref{sec:final} with some remarks.

\section{The maximal angle between a positive semidefinite matrix and a nonnegative matrix}\label{sec:gamma_n}
In this section we consider the problem of determining maximal angle between a positive semidefinite matrix and a nonnegative matrix of the same order for its own sake. However,
the observations made in this section will also be instrumental in establishing the main result.

Every $n\times n$ symmetric matrix $A$ has a unique decomposition as a difference of two positive semidefinite matrices that are orthogonal to each other:
$$A=Q-P,  \mbox{ with } Q,P \in \mathcal{P}_{n} \mbox{ and } QP=0.$$
In fact, $Q$ is the projection of $A$ on $\mathcal{P}_{n}$ and $P$ is the projection of $-A$ on the same cone.

More explicitly, let $\Lambda $ be the multiset of eigenvalues of $A$, and for every $\lambda \in \Lambda$  denote by $E_\lambda$ the orthogonal projection on the eigenspace of
$\lambda$. Then $$A=\sum_{\lambda\in \Lambda} \lambda E_\lambda$$ is the spectral decomposition of $A$.

Denote by
$\Lambda_{+}$ and $\Lambda_{-}$ the multisets of positive and negative eigenvalues of $A$, respectively. Then
$Q=\sum_{\lambda\in \Lambda_{+}} \lambda E_\lambda$ and $P=-\sum_{\lambda\in \Lambda_{-}} \lambda E_\lambda$. In particular,
the spectrum of $Q$ consists of the elements of $\Lambda_{+}$ together with $n-|\Lambda_{+}|$ zeros and the spectrum of $P$ consists of the absolute values of the elements in
$\Lambda_{-}$ together with $n-|\Lambda_{-}|$ zeros. We refer to $Q$ and $P$ as the  \emph{positive definite part} and the \emph{negative definite part} of $A$, respectively.

If $A$ is not positive semidefinite, then obviously $A\ne 0$ and $P\ne 0$, and the cosine of the angle between $A$ and $P$ is
\begin{equation}\label{eq:aa}
\frac{\langle A,P\rangle}{||A||\cdot||P||}=
\frac{-\langle  P  ,P  \rangle}{||A||\cdot||P ||}=-\frac{ ||P  ||}{||A||}=-\frac{\sqrt{\sum_{\lambda \in \Lambda_{\bf -}}{\lambda^{2}}}}{\sqrt{\sum_{\lambda \in \Lambda
}{\lambda^{2}}}}.
\end{equation}

For every nonzero symmetric $n\times n$ matrix $A$, let us denote by $\angle(A,{\mathcal{P}_n})$  the
maximal angle between $A$ and a matrix in $\mathcal{P}_n$.
The following holds:

\medskip
\begin{prop}\label{prop:A,P}
For every $A\in \mathcal{S}_n\setminus \mathcal{P}_n$, let $P\in \mathcal{P}_n$ be the negative definite part of $A$. Then
\begin{equation}\label{eq:AP_n}\angle(A,{\mathcal{P}_n})=\angle(A,P)=\arccos\left(-\frac{\sqrt{\sum_{\lambda \in \Lambda_{\bf -}}{\lambda^{2}}}}{\sqrt{\sum_{\lambda \in \Lambda
}{\lambda^{2}}}}\right),\end{equation}
where $\Lambda$  and $\Lambda_{\bf -}$ are as described above.
Moreover, $P$ is the unique matrix in $\mathcal{P}_n$, up to multiplication by a positive scalar, which forms this maximal angle with $A$.
\end{prop}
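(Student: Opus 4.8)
The plan is to recast the maximal-angle problem as a minimization. Since $\arccos$ is strictly decreasing, the matrix $B\in\mathcal{P}_n$ forming the largest angle with $A$ is precisely the one minimizing $\cos\angle(A,B)=\langle A,B\rangle/(\|A\|\,\|B\|)$. As $\|A\|$ is a fixed positive constant, it suffices to minimize $\langle A,B\rangle/\|B\|$ over all nonzero $B\in\mathcal{P}_n$, and then to check that the minimizer coincides (up to a positive scalar) with the negative definite part $P$.

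The key step is to exploit the orthogonal decomposition $A=Q-P$ with $Q,P\in\mathcal{P}_n$ and $QP=0$. For any $B\in\mathcal{P}_n$ we have
\[\langle A,B\rangle=\Tr(QB)-\Tr(PB).\]
The crucial ingredient is the standard fact that the trace of a product of two positive semidefinite matrices is nonnegative, so that $\Tr(QB)\ge 0$ and hence $\langle A,B\rangle\ge -\Tr(PB)$. Applying the Cauchy--Schwarz inequality in $\mathcal{S}_n$ to the pair $P,B$ gives $\Tr(PB)=\langle P,B\rangle\le\|P\|\,\|B\|$, and combining the two estimates yields
\[\langle A,B\rangle\ge -\|P\|\,\|B\|.\]
Dividing by $\|B\|$ shows $\cos\angle(A,B)\ge -\|P\|/\|A\|$ for every nonzero $B\in\mathcal{P}_n$. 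By the computation already carried out in (\ref{eq:aa}), the right-hand side equals $\cos\angle(A,P)$, so $\angle(A,B)\le\angle(A,P)$ for all such $B$; this establishes that $P$ realizes the maximal angle and, together with (\ref{eq:aa}), gives the value displayed in (\ref{eq:AP_n}).

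For uniqueness I would trace back through the two inequalities and identify their equality cases. A matrix $B$ attains the minimum $\langle A,B\rangle/\|B\|=-\|P\|$ only if both $\Tr(QB)=0$ and $\Tr(PB)=\|P\|\,\|B\|$ hold simultaneously. The second condition is equality in Cauchy--Schwarz with a positive inner product, which forces $B=cP$ for some scalar $c>0$; for such a $B$ the first condition reduces to $c\,\Tr(QP)=0$, which holds automatically since $QP=0$. Hence the maximizer is unique up to a positive scalar, namely $P$, as claimed. The step I expect to require the most care is exactly this uniqueness argument: one must keep the Cauchy--Schwarz equality case in its \emph{positive} form so as to conclude $c>0$ (rather than merely $B=cP$ for some real $c$), and verify that the two equality conditions are consistent rather than over-determined. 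Everything else is a direct application of the two elementary facts about $\mathcal{S}_n$, namely the nonnegativity of $\Tr(QB)$ for positive semidefinite $Q,B$ and the Cauchy--Schwarz inequality.
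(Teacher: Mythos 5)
Your proof is correct and follows essentially the same route as the paper's: the same chain $\langle A,B\rangle=\Tr(QB)-\Tr(PB)\geq -\langle P,B\rangle\geq -\|P\|\cdot\|B\|$, using the nonnegativity of the trace of a product of positive semidefinite matrices together with Cauchy--Schwarz, and then the computation in \eqref{eq:aa} for the displayed value. Your uniqueness argument is a slightly more explicit version of the paper's one-line appeal to the Cauchy--Schwarz equality case, correctly noting that the positive form $\langle P,B\rangle=\|P\|\,\|B\|>0$ forces $B=cP$ with $c>0$ and that the condition $\Tr(QB)=0$ is then automatic from $QP=0$.
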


\begin{proof}
For every $0\ne X\in \mathcal{P}_n$ we have
\begin{equation}\label{furthestpsd}\frac{\langle A,X \rangle}{||A||\cdot||X||}\geq -\frac{\langle P,X\rangle}{||A||\cdot||X||}\geq -\frac{||P||}{||A|| }=\frac{\langle
A,P\rangle}{||A||\cdot||P||},  \end{equation}
where the first inequality follows from the fact that $Q$, the positive definite part of $A$, satisfies $\langle Q,X \rangle \geq 0$,  and the second inequality from the
Cauchy-Schwarz inequality. This shows that $\angle(A,X)\le \angle(A,P)$ for every $X\in \mathcal{P}_n$. By the condition for equality in the Cauchy-Schwarz inequality, we get
that  $\angle(A,X)=\angle(A,P)$ if and only if $X$ is a positive scalar multiple of $P$.
\end{proof}

Similarly, every $A\in \mathcal{S}_n$ has a unique decomposition as a difference of two nonnegative matrices that are 
orthogonal to each other:
$$A=M-N,  \mbox{ with } M,N \in \mathcal{N}_{n} \mbox{ and }  M\circ N=0,$$
where $\circ$ denotes the entrywise product of matrices (
also often called the Hadamard product).

In fact, $M=\max(A, 0)$, with the maximum defined entrywise, is the projection of $A$ on $\mathcal{N}_n$, and $N=\max(-A,0)$ is the projection of $-A$ on that cone. We refer to
$M$ and $N$ as the \emph{positive part} and the \emph{negative part} of $A$, respectively. If $A\notin \mathcal{N}_n$, then $A,N\ne 0$, and the cosine of the angle between $A$
and $N$ is
\begin{equation}\label{eq:bb}
\frac{\langle A,N\rangle}{||A||\cdot||N||}=
\frac{-\langle  N  ,N  \rangle}{||A||\cdot||N ||}=-\frac{ ||N  ||}{||A||}=-\frac{\sqrt{\sum_{a_{ij}<0 }{a_{ij}^2}}}{\sqrt{\sum {a_{ij}^2} }}.
\end{equation}
We denote by $\angle(A,\mathcal{N}_n)$ the maximal angle between $A$ and a matrix in $\mathcal{N}_n$. Then the following holds:

\medskip

\begin{prop}\label{prop:A,N}
For every $A\in \mathcal{S}_n\setminus \mathcal{N}_n$, let $N\in \mathcal{P}_n$ be the negative  part of $A$. Then
\begin{equation}\label{eq:AN_n}\angle(A,{\mathcal{N}_n})=\angle(A,N)=\arccos\left(-\frac{\sqrt{\sum_{a_{ij}<0 }{a_{ij}^2}}}{\sqrt{\sum {a_{ij}^2} }}\right).\end{equation}
Moreover, $N$ is the unique matrix in $\mathcal{N}_n$, up to multiplication by a positive scalar, which forms this maximal angle with $A$.
\end{prop}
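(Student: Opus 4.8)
The plan is to transcribe the proof of Proposition~\ref{prop:A,P} almost verbatim, with the cone $\mathcal{P}_n$ replaced by $\mathcal{N}_n$, the positive/negative definite parts replaced by the positive/negative parts $M,N$, and the two supporting facts adapted accordingly. Those two facts are: first, that the standard inner product of two entrywise nonnegative matrices is nonnegative, which will play the role that $\langle Q,X\rangle\ge 0$ played for positive semidefinite matrices; and second, that the Hadamard orthogonality $M\circ N=0$ means $M$ and $N$ have disjoint supports and hence $\langle M,N\rangle=0$, which will play the role of the condition $QP=0$.

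Concretely, for every $0\ne X\in\mathcal{N}_n$ I would establish the chain
\[\frac{\langle A,X\rangle}{||A||\cdot||X||}\ge -\frac{\langle N,X\rangle}{||A||\cdot||X||}\ge -\frac{||N||}{||A||}=\frac{\langle A,N\rangle}{||A||\cdot||N||},\]
where the first inequality comes from writing $\langle A,X\rangle=\langle M,X\rangle-\langle N,X\rangle$ and using $\langle M,X\rangle=\sum_{i,j}M_{ij}X_{ij}\ge 0$, the second is the Cauchy--Schwarz inequality, and the final equality is (\ref{eq:bb}). Since $\arccos$ is decreasing, this yields $\angle(A,X)\le\angle(A,N)$ for every $X\in\mathcal{N}_n$, which proves (\ref{eq:AN_n}).

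For the uniqueness clause I would examine when both inequalities are tight. Equality in Cauchy--Schwarz forces $X$ to be a scalar multiple of $N$, and positivity of the scalar follows since $X$ and $N$ are nonzero and nonnegative; conversely, any $X=cN$ with $c>0$ makes the first inequality an equality as well, because $\langle M,X\rangle=c\langle M,N\rangle=0$ by the disjoint-support property. I do not expect a genuine obstacle, as this is a faithful mirror of the semidefinite argument; the only point needing a little care is confirming that the two distinct equality conditions---Cauchy--Schwarz for the second step and disjoint supports for the first---are met simultaneously exactly by the positive multiples of $N$.
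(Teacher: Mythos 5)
Your proposal is correct and is exactly the proof the paper intends: the paper omits it, stating only that it is ``completely parallel'' to the proof of Proposition~\ref{prop:A,P}, and your transcription---with $\langle M,X\rangle\ge 0$ for entrywise nonnegative matrices replacing $\langle Q,X\rangle\ge 0$, and $M\circ N=0$ giving $\langle M,N\rangle=0$ in place of $QP=0$---is precisely that parallel argument. If anything, your explicit check that the Cauchy--Schwarz equality case ($X=cN$, $c>0$) simultaneously makes the first inequality tight is slightly more careful than the paper's treatment of the analogous point.
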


The proof is completely parallel to the proof of Proposition \ref{eq:AP_n}, and is therefore omitted. The next proposition
 demonstrates the computation of $\angle(P, \mathcal{N}_n)$ in a special case.

\medskip

\begin{prop}\label{prop:rank1}
Let $P\in \mathcal{P}_n\setminus \mathcal{N}_n$ have rank $1$. Then $\angle(P, \mathcal{N}_n)\le\frac{3}{4}\pi$.
Furthermore, there exists a rank 1 positive semidefinite matrix $P\in \mathcal{P}_n\setminus \mathcal{N}_n$ such that $\angle(P, \mathcal{N}_n)=\frac{3}{4}\pi$.
\end{prop}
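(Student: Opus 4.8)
The plan is to exploit the rank-one structure to reduce the computation of $\angle(P,\mathcal{N}_n)$, via Proposition \ref{prop:A,N}, to a single elementary inequality. First I would write $P=vv^T$ for some nonzero vector $v\in\reals^n$, so that $P_{ij}=v_iv_j$. The hypothesis $P\notin\mathcal{N}_n$ says that $P$ has a negative entry, i.e.\ some product $v_iv_j$ is negative; this forces $v$ to possess at least one strictly positive and at least one strictly negative coordinate.

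Next I would split the index set according to the sign of the coordinates of $v$ and set $a=\sum_{v_i>0}v_i^2$ and $b=\sum_{v_i<0}v_i^2$, both strictly positive by the preceding observation. A direct computation then gives $\sum_{i,j}P_{ij}^2=\|v\|^4=(a+b)^2$, while the negative entries of $P$ are precisely those $P_{ij}=v_iv_j$ for which $v_i$ and $v_j$ have opposite signs, so that $\sum_{P_{ij}<0}P_{ij}^2=2ab$. Substituting these two quantities into the formula of Proposition \ref{prop:A,N} yields
\[
\cos\angle(P,\mathcal{N}_n)=-\frac{\sqrt{2ab}}{a+b}.
\]

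To obtain the bound I would observe that $(a-b)^2\ge 0$ gives $4ab\le(a+b)^2$, whence $\frac{\sqrt{2ab}}{a+b}\le\frac{1}{\sqrt{2}}$ and therefore $\cos\angle(P,\mathcal{N}_n)\ge-\frac{1}{\sqrt{2}}=\cos\frac{3}{4}\pi$; since $\arccos$ is decreasing this is exactly $\angle(P,\mathcal{N}_n)\le\frac{3}{4}\pi$. Equality holds if and only if $a=b$, that is, when the positive and the negative coordinates of $v$ contribute equal squared mass. For the final assertion I would simply exhibit such a vector: taking $v=(1,-1,0,\dots,0)^T$ gives $a=b=1$, and the associated rank-one matrix $P=vv^T$ lies in $\mathcal{P}_n\setminus\mathcal{N}_n$ and attains $\angle(P,\mathcal{N}_n)=\frac{3}{4}\pi$.

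I do not expect a genuine obstacle here once the parametrization $P=vv^T$ is in hand; the one point deserving a moment's care is checking that $P\notin\mathcal{N}_n$ really guarantees $a,b>0$, which is what makes the expression $\sqrt{2ab}/(a+b)$ well defined and the equality case $a=b$ achievable. The whole argument ultimately rests on the two-variable inequality $4ab\le(a+b)^2$.
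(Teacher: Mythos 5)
Your proposal is correct and follows essentially the same route as the paper: both parametrize $P=vv^T$, split the coordinates by sign, and reduce the cosine to $-\sqrt{2ab}/(a+b)$ (your $a,b$ are exactly the paper's $\|v\|^2,\|w\|^2$ after its block permutation), concluding via $4ab\le(a+b)^2$ with equality iff $a=b$. The only cosmetic difference is that you invoke the formula of Proposition \ref{prop:A,N} directly rather than recomputing $\langle P,N\rangle$, $\|P\|$, $\|N\|$ by hand as the paper does.
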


\begin{proof}
By the assumptions, $P=uu^T$, where $u$ has both positive and negative entries. By a suitable permutation of rows and columns of $P$ we may assume that
\[u=\left[\begin{array}{r}
            v \\
            -w
          \end{array}
\right],\qquad v,w\ge 0, ~~v,w\ne 0.\]
Then
\[P=\left[\begin{array}{rr}
            vv^T & -vw^T \\
            -wv^T & ww^T
          \end{array}
\right],\]
and the negative part of $P$ is
\[N=\left[\begin{array}{cc}
            0 &  vw^T \\
             wv^T & 0
          \end{array}
\right].\]
For any two vectors $x$ and $y$,
\[||xy^T|| =\sqrt{\Tr (xy^Tyx^T)}= ||x||  ||y||.\]
Thus
\[||P|| = ||u||^2=||v||^2+||w||^2~~,~~||N||=\sqrt{2}||v||\cdot||w||,\]
and
\[\langle P,N\rangle=-2||vw^T||^2=-2||v||^2  ||w||^2.\]
Thus
\[\frac{\langle P,N\rangle}{||P||\cdot||N||}=-\frac{\sqrt{2}||v||\cdot||w||}{||v||^2+||w||^2}\ge -\frac{\sqrt{2}}{2}.\]
Equality holds in the last inequality if and only if $||v||=||w||$.
Thus $\angle(P,N)\le \frac{3}{4}\pi$, with equality if and only if $||v||=||w||$.
\end{proof}

In particular, the last proposition implies the following known result (known by the proof of Proposition 6.15 in \cite{HirSee10}, and the monotonicity of $\{\gamma_n\}$).

\medskip

\begin{cor}\label{cor:.75pi}
For every $n\ge 2$,  $\gamma_n\ge \frac{3}{4}\pi$.
\end{cor}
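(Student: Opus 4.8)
The plan is to observe that this corollary is an immediate consequence of Proposition \ref{prop:rank1}, so the work is almost entirely bookkeeping: I just need to verify that the matrix attaining the maximal angle in Proposition \ref{prop:rank1} is a legitimate competitor in the maximization that defines $\gamma_n$. Recall from the introduction that
\[\gamma_n = \max_{X\in\mathcal{P}_n,\, Y\in\mathcal{N}_n,\atop \|X\|=\|Y\|=1}\arccos\langle X,Y\rangle,\]
i.e.\ $\gamma_n$ is the largest angle realized by \emph{any} pair consisting of a positive semidefinite matrix and a nonnegative matrix of order $n$. Thus it suffices to exhibit one such pair whose angle equals $\frac{3}{4}\pi$.

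First I would invoke the second assertion of Proposition \ref{prop:rank1}, which guarantees a rank $1$ matrix $P\in\mathcal{P}_n\setminus\mathcal{N}_n$ with $\angle(P,\mathcal{N}_n)=\frac{3}{4}\pi$. By the definition of $\angle(P,\mathcal{N}_n)$ and by Proposition \ref{prop:A,N}, this maximal angle is attained by the negative part $N$ of $P$, which satisfies $N\in\mathcal{N}_n$. Hence $P\in\mathcal{P}_n$, $N\in\mathcal{N}_n$, and $\angle(P,N)=\frac{3}{4}\pi$. Since the pair $(P,N)$ competes in the maximization defining $\gamma_n$, we conclude $\gamma_n\ge\angle(P,N)=\frac{3}{4}\pi$.

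The only point deserving a moment's care is whether the required matrix $P$ exists for every $n\ge 2$, not merely for one particular order. This is transparent from the construction underlying Proposition \ref{prop:rank1}: one takes $P=uu^T$ with $u=\left[\begin{smallmatrix} v\\ -w\end{smallmatrix}\right]$, and the equality case needs only $\|v\|=\|w\|$ with $v,w\ge 0$ nonzero, which is realizable as soon as $n\ge 2$. Alternatively, I could establish the bound for $n=2$ alone and then appeal to the monotonicity of $\{\gamma_n\}$ noted in the introduction (where $N\in\mathcal{N}_n$ and $P\in\mathcal{P}_n$ embed as $N\oplus 0$ and $P\oplus 0$ without changing the angle), which propagates $\gamma_2\ge\frac{3}{4}\pi$ to all larger orders. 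I do not anticipate any genuine obstacle here, as the substantive estimate was already carried out in Proposition \ref{prop:rank1}; the corollary merely reinterprets that estimate in terms of $\gamma_n$.
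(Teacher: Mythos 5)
Your proposal is correct and matches the paper's own route: the paper derives Corollary \ref{cor:.75pi} directly from Proposition \ref{prop:rank1} (noting parenthetically the alternative via $\gamma_2$ and the monotonicity of $\{\gamma_n\}$, both of which you also mention). Your added verifications --- that the extremal angle is attained by the negative part $N\in\mathcal{N}_n$ via Proposition \ref{prop:A,N}, and that the equality case $\|v\|=\|w\|$ is realizable for every $n\ge 2$ --- are exactly the bookkeeping the paper leaves implicit.
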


\medskip

We can now prove

\medskip

\begin{prop}\label{prop:NP}
Let $n\ge 2$, and let $P\in \mathcal{P}_n$ and $N\in \mathcal{N}_n$  be any two matrices such that $\angle(P,N)=\gamma_n$.
Then $\langle P, N\rangle<0$,  $\diag N=0$,
and $1\le \rank P\le n-1$.
\end{prop}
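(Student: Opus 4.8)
The plan is to prove the three assertions separately, using the two projection propositions (Propositions~\ref{prop:A,P} and~\ref{prop:A,N}) as the main leverage, since a maximizing pair $(P,N)$ must simultaneously realize the ``furthest matrix in $\mathcal{P}_n$ from $N$'' and the ``furthest matrix in $\mathcal{N}_n$ from $P$.'' I would first record the easy observation that $\langle P,N\rangle < 0$: by Corollary~\ref{cor:.75pi} we have $\gamma_n \ge \frac{3}{4}\pi > \frac{\pi}{2}$, and an angle strictly exceeding $\frac{\pi}{2}$ forces $\cos\angle(P,N) < 0$, i.e. $\langle P,N\rangle < 0$. In particular $P\notin\mathcal{N}_n$ and $N\notin\mathcal{P}_n$, so both projection propositions apply to the pair.

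For the claim $\diag N = 0$, the natural approach is an optimality/perturbation argument. Since $\langle P,N\rangle < 0$ and $P$ is positive semidefinite with nonnegative diagonal, the diagonal entries of $N$ contribute a nonnegative amount $\sum_i P_{ii}N_{ii}$ to the inner product $\langle P,N\rangle = \sum_{i,j}P_{ij}N_{ij}$, while they do contribute to $\|N\|$. Concretely, I would argue that zeroing out $\diag N$ (replacing $N$ by $N'$ with $N'_{ij}=N_{ij}$ off-diagonal and $N'_{ii}=0$) leaves $N'\in\mathcal{N}_n$, can only make the inner product more negative (since we remove nonnegative terms $P_{ii}N_{ii}\ge 0$), and strictly decreases $\|N\|$ unless $\diag N$ was already zero. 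Both effects push $\cos\angle(P,N')$ further toward $-1$, so if $\diag N\ne 0$ the pair $(P,N')$ would strictly beat the maximal angle $\gamma_n$, a contradiction. The one subtlety to handle cleanly is the degenerate case where removing the diagonal kills $N'$ entirely, but that cannot happen since the off-diagonal part of $N$ is nonzero (otherwise $N$ would be diagonal, hence positive semidefinite, forcing $\langle P,N\rangle\ge 0$).

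For the rank bounds, $\rank P\ge 1$ is immediate since $P\ne 0$. The substantive part is $\rank P\le n-1$, equivalently that $P$ is singular. Here I would invoke Proposition~\ref{prop:A,P} applied to $A = P$ in its role as the matrix whose furthest nonnegative partner is sought---more precisely, I would use the optimality of $N$ for $P$: by Proposition~\ref{prop:A,N} (with $A=P$), the maximizing $N$ must be the negative part of $P$, and by Proposition~\ref{prop:A,P} (with $A=N$), the maximizing $P$ must be the negative definite part of $N$. The key structural consequence is that $P$ equals the negative definite part of $N$, so the spectrum of $P$ consists of the absolute values of the negative eigenvalues of $N$ together with $n-|\Lambda_-(N)|$ zeros. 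Since $N\in\mathcal{N}_n$ is a nonzero symmetric nonnegative matrix with zero diagonal, its trace is $0$, so it has at least one positive eigenvalue, hence $|\Lambda_-(N)|\le n-1$; therefore $P$ has at least one zero eigenvalue and $\rank P\le n-1$.

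The step I expect to be the main obstacle is establishing that the maximizing $P$ is genuinely the negative definite part of $N$ (and vice versa) in a way that the rank argument can use. The projection propositions give uniqueness of the best partner \emph{up to positive scaling}, so I must argue that at a joint maximum each matrix is the optimal partner for the other---a standard but care-requiring ``mutual optimality'' observation: if $P$ did not maximize $\angle(\cdot, N)$ over $\mathcal{P}_n$, we could replace it and strictly increase the angle, contradicting $\angle(P,N)=\gamma_n$. Once mutual optimality is in hand, Propositions~\ref{prop:A,P} and~\ref{prop:A,N} pin down the spectral structure and the trace-zero property of $N$ (from $\diag N=0$) closes the argument.
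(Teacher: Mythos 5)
Your proof is correct, and its backbone is the same as the paper's: mutual optimality of the extremal pair combined with the uniqueness clauses of Propositions \ref{prop:A,P} and \ref{prop:A,N}, which identify $N$ (up to a positive scalar) as the negative part of $P$ and $P$ (up to a positive scalar) as the negative definite part of $N$; the scalar ambiguity is harmless for the rank and diagonal conclusions, as you note. You deviate in two local steps. For $\diag N=0$, the paper simply reads the conclusion off the structural identification: since $\diag P\ge 0$, the negative part of $P$ has zero diagonal, hence so does $N$. Your perturbation argument (zeroing the diagonal leaves $\langle P,N'\rangle\le\langle P,N\rangle<0$ while strictly shrinking $\|N\|$, so the angle strictly increases unless $\diag N=0$, with the degenerate case $N'=0$ correctly excluded because a diagonal nonnegative $N$ would be positive semidefinite and force $\langle P,N\rangle\ge 0$) is a valid, self-contained alternative that does not even need the uniqueness propositions for this step. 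For the existence of a positive eigenvalue of $N$ (which forces $\rank P\le n-1$), the paper invokes the Perron--Frobenius theorem, whereas you use the trace-zero observation: $\diag N=0$ and $N\ne 0$ make the eigenvalues sum to zero without all vanishing. Both are sound; your version is arguably more elementary, while the paper's is shorter because it extracts $\diag N=0$ for free from the same identification it already needs for the rank bound.
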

\begin{proof}
By Corollary \ref{cor:.75pi}, $\gamma_n\ge \frac{3}{4}\pi$, and thus $\langle P, N\rangle<0$. This implies that $P\notin \mathcal{N}_n$ and $N\notin \mathcal{P}_n$.
Since $\angle(P,N)$ is the maximal possible angle between a positive semidefinite and a nonnegative matrix of the same order, $N$ has
to be the nonnegative matrix forming the maximal possible angle with $P$, and $P$ has to be the nonnegative matrix forming the maximal possible angle
with $N$.

By the uniqueness parts in Propositions \ref{prop:A,P} and \ref{prop:A,N}, $N$ is a positive scalar multiple of the negative part of $P$, and $P$ is a positive
scalar multiple of the negative definite part of $N$. Since $\diag P\ge 0$ and $N$ is the negative part of $P$, we get that $\diag N=0$. By the Perron-Frobenius Theorem 
the nonzero $N$ has at least one positive eigenvalue, so its negative definite part $P$ satisfies $\rank P\le n-1$.
\end{proof}


\medskip

\begin{prop}\label{prop:rankn-1}
Let $n\ge 2$, let $N\in \mathcal{N}_n$  have $\diag N=0$ and let $P$ be its negative definite part. If $\rank P=n-1$, then
$\angle(N, \mathcal{P}_n)<\frac{3}{4}\pi$.
\end{prop}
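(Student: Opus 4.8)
The plan is to translate the angle into a statement about the eigenvalues of $N$ and then read off the two hypotheses at the level of the spectrum. Let $\lambda_1\ge\lambda_2\ge\cdots\ge\lambda_n$ be the eigenvalues of $N$. Applying Proposition~\ref{prop:A,P} with $A=N$ gives
\[\cos\angle(N,\mathcal P_n)=-\frac{\|P\|}{\|N\|}=-\frac{\sqrt{\sum_{\lambda<0}\lambda^2}}{\sqrt{\sum_{i=1}^n\lambda_i^2}}.\]
Since $\cos\frac34\pi=-\tfrac{1}{\sqrt2}$ and $\arccos$ is decreasing, the desired inequality $\angle(N,\mathcal P_n)<\frac34\pi$ is equivalent to $\|P\|^2<\tfrac12\|N\|^2$, that is, after cancelling the common contribution of the negative eigenvalues, to
\[\sum_{i\,:\,\lambda_i<0}\lambda_i^2<\lambda_1^2,\]
where I will have argued that $\lambda_1$ is the unique positive eigenvalue.

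Next I would use the two hypotheses to pin down the spectrum. The rank condition $\rank P=n-1$ says that $N$ has exactly $n-1$ negative eigenvalues, namely $\lambda_2,\dots,\lambda_n<0$, leaving a single eigenvalue $\lambda_1\ge0$. The zero-diagonal hypothesis gives $\Tr N=0$, so $\lambda_1=-\sum_{i=2}^n\lambda_i=\sum_{i=2}^n|\lambda_i|$. Since $\rank P=n-1\ge1$ forces $N$ to have at least one negative eigenvalue, this sum is strictly positive, whence $\lambda_1>0$; in particular $N$ has no zero eigenvalue and $\lambda_1$ is its unique positive eigenvalue (consistently with the Perron--Frobenius Theorem, which identifies $\lambda_1$ as the spectral radius).

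It then remains to verify $\sum_{i=2}^n\lambda_i^2<\lambda_1^2$. Substituting $\lambda_1=\sum_{i=2}^n|\lambda_i|$ and expanding the square,
\[\lambda_1^2=\Big(\sum_{i=2}^n|\lambda_i|\Big)^2=\sum_{i=2}^n\lambda_i^2+2\!\!\sum_{2\le i<j\le n}\!\!|\lambda_i|\,|\lambda_j|,\]
so $\lambda_1^2-\sum_{i=2}^n\lambda_i^2$ equals the sum of the cross terms, which is strictly positive exactly when there are at least two negative eigenvalues, i.e. when $n-1\ge2$. Since $\rank P=n-1\ge2$ for $n\ge3$, the cross terms are present, the inequality is strict, and the proof is complete.

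The main point to watch is the source of strictness: the same computation yields only the non-strict bound $\angle(N,\mathcal P_n)\le\frac34\pi$ in general, with equality precisely when $N$ has a single negative eigenvalue (as for $n=2$, where the cross terms vanish). Thus the strict inequality genuinely relies on the presence of at least two negative eigenvalues, which the hypothesis $\rank P=n-1$ supplies once $n\ge3$.
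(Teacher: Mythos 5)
Your proof is correct where the statement is true, and it replaces the paper's key step with a more elementary one. Both arguments begin identically: apply Proposition~\ref{prop:A,P} with $A=N$, use $\Tr N=0$ to get $\lambda_1=\sum_{i=2}^n|\lambda_i|>0$, and reduce the claim to the spectral inequality $\sum_{i=2}^n\lambda_i^2<\lambda_1^2$. The paper proves this inequality by convex optimization: it maximizes the strictly convex function $g(x)=\sum_{i=2}^n x_i^2$ over the simplex $\Delta=\{x\le 0:\sum_{i=2}^n x_i=-\rho\}$, observes that the maximum $\rho^2$ is attained only at the extreme points (one coordinate equal to $-\rho$, the rest zero), and invokes $\rank P=n-1$ to conclude that $(\lambda_2,\ldots,\lambda_n)$ is not extreme, whence strictness. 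Your one-line expansion $\lambda_1^2=\left(\sum_{i=2}^n|\lambda_i|\right)^2=\sum_{i=2}^n\lambda_i^2+2\sum_{2\le i<j\le n}|\lambda_i||\lambda_j|$ proves the same inequality without any convexity machinery, and it buys something the paper's argument leaves implicit: a transparent characterization of the equality case, namely that equality holds if and only if exactly one negative eigenvalue is present.

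Your closing caveat is not a defect of your proof but an actual (harmless) flaw in the statement as printed: for $n=2$ one necessarily has $\rank P=1=n-1$, the cross terms vanish, and $\angle(N,\mathcal{P}_2)=\frac{3}{4}\pi$ exactly --- witness the adjacency matrix of $K_2$, which the paper itself exhibits in Section~\ref{sec:final} as half of the antipodal pair in $\mathcal{C}_2$. So the hypothesis should read $n\ge 3$. The paper's own proof breaks at the same point: for $n=2$ the set $\Delta$ is a single point, and $(\lambda_2)=(-\rho)$ \emph{is} an extreme point of it, so the claimed non-extremality fails. Nothing downstream is damaged, since in the proof of Theorem~\ref{thm:n<5} the rank-one case for $n=2$ is already handled by Propositions~\ref{prop:rank1} and~\ref{prop:NP}. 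Apart from this boundary case, which you correctly isolated rather than glossed over, your argument fully establishes the proposition.
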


\begin{proof}
By the assumptions on $N$,  its eigenvalues are $\rho=\lambda_1>0$, and $n-1$ negative eigenvalues $\lambda_2, \ldots , \lambda_n$ with  $\sum_{i=2}^n\lambda_i=-\rho$.  By
Proposition \ref{prop:A,P},
\[\cos \angle(N, \mathcal{P}_n)= -\frac{\sqrt{\sum_{i=2}^n{\lambda_i^{2}}}}{\sqrt{\rho^2+\sum_{i=2}^n{\lambda_i^{2}}}}.\]
The function $g(x_2, \ldots, x_n)=\sum_{i=2}^n x_i^2$  is convex, and thus attains its maximum on the compact convex set
\[\Delta=\left\{(x_2, \ldots, x_n)\in \reals ^{n-1}\, :\, x_i\le 0,~ i=2, \ldots, n-1 , \mbox{ and } \sum_{i=2}^n x_i=-\rho\right\}\]
at an extreme point of this set, i.e., at a point $x$ such that $x_i=-\rho$ for some $i$ and  $x_j=0$ for $j\ne i$. That is,
\[\max_{x\in \Delta}g(x)=\rho^2.\]
The function $f(t)=-\sqrt{\frac{t}{\rho^2+t}}$ is  decreasing  on $[0, \infty)$, and thus $f(g(x_2, \ldots, x_2))$ attains a minimum
on $\Delta$ where $g$ attains its maximum, and $\min_{x\in \Delta}f(g(x))=-\sqrt{\frac{\rho^2}{2\rho^2}}=-\frac{\sqrt{2}}{2}$. Since $\cos \angle(N, \mathcal{P}_n)=f(g(\lambda_2,
\ldots, \lambda_n))$, and
$(\lambda_2, \ldots, \lambda_n)\in \Delta$, we get that $\angle(N, \mathcal{P}_n)\le \cos (\min_{x\in \Delta}f(g(x))) =\frac{3}{4}\pi$.

By the assumption that $\rank P=n-1$  we see that $(\lambda_2, \ldots, \lambda_n)$ is not an extreme point of $\Delta$, and since
$g(x)$ is strictly convex on $\Delta$, it does not attain its maximum on $(\lambda_2, \ldots, \lambda_n)$, and neither does $\arccos(f(g(x)))$. Hence the strict inequality.
\end{proof}

In other words, Proposition \ref{prop:rankn-1} tells us that if $(N,P)$ is a pair attaining $\gamma_{n}$, then we must have $\rank P \leq n-2$.

We can now show:
\medskip

\begin{thm}\label{thm:n<5}
For $n\le 4$,  $\gamma_n=\frac{3}{4}\pi$.
\end{thm}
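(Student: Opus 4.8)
The plan is to combine the lower bound $\gamma_n\ge\frac{3}{4}\pi$ from Corollary \ref{cor:.75pi} with a matching upper bound $\gamma_n\le\frac{3}{4}\pi$ for $n\le 4$, extracted from the structure of a pair attaining $\gamma_n$. Fix such a pair $(P,N)$ with $P\in\mathcal{P}_n$, $N\in\mathcal{N}_n$ and $\angle(P,N)=\gamma_n$; it exists since the maximum defining $\gamma_n$ is attained. By Proposition \ref{prop:NP} I may assume $\diag N=0$, that $P$ is a positive multiple of the negative definite part of $N$, and that $1\le\rank P\le n-1$. Because $P$ is the negative definite part of $N$, Proposition \ref{prop:A,P} gives $\gamma_n=\angle(P,N)=\angle(N,\mathcal{P}_n)$, so the bound can be read off the spectrum of $N$.

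First I would dispose of the extreme ranks. If $\rank P=1$, Proposition \ref{prop:rank1} yields $\angle(P,\mathcal{N}_n)\le\frac{3}{4}\pi$, and since $N$ is the nonnegative matrix forming the maximal angle with $P$ (Proposition \ref{prop:NP}), this gives $\gamma_n=\angle(P,N)=\angle(P,\mathcal{N}_n)\le\frac{3}{4}\pi$. This already settles $n=2$, where $1\le\rank P\le n-1=1$ forces $\rank P=1$. For $n\ge 3$, Proposition \ref{prop:rankn-1} rules out $\rank P=n-1$, as that would force the strict inequality $\angle(N,\mathcal{P}_n)<\frac{3}{4}\pi$, contradicting Corollary \ref{cor:.75pi}. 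Hence for $n=3$ only $\rank P=1$ remains, and the claim follows as above.

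The remaining, and only substantial, case is $n=4$ with $\rank P=2$. Here $N$ is a $4\times 4$ nonnegative symmetric matrix with zero diagonal and exactly two negative eigenvalues; write its spectrum as $\rho,\mu,-\alpha,-\beta$ with $\rho$ the spectral radius, $\mu\ge 0$ and $\alpha,\beta>0$ (say $\alpha\ge\beta$). The zero diagonal forces trace zero, i.e. $\rho+\mu=\alpha+\beta$, while Perron--Frobenius gives $\rho\ge\alpha,\beta,\mu$. By Proposition \ref{prop:A,P} it suffices to show $\cos\angle(N,\mathcal{P}_4)=-\sqrt{(\alpha^2+\beta^2)/(\rho^2+\mu^2+\alpha^2+\beta^2)}\ge-\frac{\sqrt2}{2}$, equivalently $\alpha^2+\beta^2\le\rho^2+\mu^2$. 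Since $\{\alpha,\beta\}$ and $\{\rho,\mu\}$ have equal sum, this reduces to $\alpha\beta\ge\rho\mu$, which I would verify by writing $\rho=\alpha+t$ with $t=\rho-\alpha\ge 0$, whence $\mu=\beta-t$ and $\alpha\beta-\rho\mu=t(\alpha-\beta+t)\ge 0$.

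I expect this last case to be the main obstacle, precisely because it is not accessible to the trace/convexity argument behind Proposition \ref{prop:rankn-1}: with the trace condition alone one can make $\alpha^2+\beta^2>\rho^2+\mu^2$, so the bound genuinely needs the Perron--Frobenius domination $\rho\ge\alpha,\beta$, which is what constrains the rank-$2$ spectrum enough to recover the $\frac{3}{4}\pi$ ceiling. Assembling the cases gives $\gamma_n\le\frac{3}{4}\pi$ for $n\le 4$, and together with Corollary \ref{cor:.75pi} the asserted equality $\gamma_n=\frac{3}{4}\pi$.
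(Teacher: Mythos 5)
Your proof is correct and follows essentially the same route as the paper: reduce via Propositions \ref{prop:NP}, \ref{prop:rank1} and \ref{prop:rankn-1} to the case $n=4$ with $\diag N=0$ and $\rank P=2$, then establish $\lambda_3^2+\lambda_4^2\le\rho^2+\mu^2$ from the trace-zero condition and the Perron--Frobenius bound $\lambda_4\ge-\rho$. The only difference is cosmetic, in the final step: the paper maximizes the convex function $x^2+y^2$ over the line segment $\Delta$ and evaluates it at the extreme point $(-\mu,-\rho)$, whereas you prove the equivalent inequality $\alpha\beta\ge\rho\mu$ directly by the substitution $\rho=\alpha+t$, obtaining the same fact algebraically rather than via convexity.
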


\begin{proof}
Propositions \ref{prop:rank1}, \ref{prop:NP} and \ref{prop:rankn-1} imply that  $\gamma_n=\frac{3}{4}\pi$ for $n\le 3$.
It remains to consider the case of $n=4$. Also, by these propositions it suffices to consider $\angle(N, \mathcal{P}_n)$ for
$N\in \mathcal{N}_4$ with $\diag N=0$ and a negative definite part $P$ of rank 2. Such $N$ has a Perron eigenvalue $\rho>0$, and
its complete set of eigenvalues is
\[\rho\ge \mu\ge 0>\lambda_3\ge \lambda_4,\]
where $\lambda_3+\lambda_4=-\rho-\mu$  and $\lambda_4\ge -\rho$. Then
\[\cos \angle(N, \mathcal{P}_n)= -\frac{\sqrt{\lambda_3^2+\lambda_4^{2}}}{ \sqrt{\rho^2+\mu^2+\lambda_3^2+\lambda_4^{2}}} .\]
Similarly to the previous proof, we note that $g(x,y)=x^2+y^2$ is a convex function, and the set
\[\Delta=\left\{(x,y)\in \reals ^2\, :\, 0\ge x\ge y\ge -\rho \mbox{ and } x+y=-\rho-\mu\right\}\]
is a compact convex set. By the assumptions on $\rho$ and $\mu$, $\Delta$ is the line segment
\[y=-\rho-\mu-x~~,~~ -\frac{\rho+\mu}{2}\le x\le -\mu\, .\]
Its extreme points are
\[(-\mu, -\rho) \mbox{ and } \left(-\frac{\rho+\mu}{2}, -\frac{\rho+\mu}{2}\right),\]
and the maximal of $g$ on $\Delta$ is the greater of
\[g(-\mu, -\rho)=\mu^2+\rho^2 \mbox{ and } g\left(-\frac{\rho+\mu}{2}, -\frac{\rho+\mu}{2}\right)=\frac{(\rho+\mu)^2}{2}.\]
Thus
\[\max_{(x,y)\in \Delta} g(x,y)=\mu^2+\rho^2,\]
and it is attained when $x=-\mu$ and $y=-\rho$.
The function $f(t)=-\sqrt{\frac{t}{\rho^2+\mu^2+t}}$ is a decreasing  function on $[0, \infty)$, and therefore $f(g(x,y))$ attains a minimum
on $\Delta$ at $(-\mu, -\rho)$, and $\min_{(x,y)\in \Delta}f(g(x))=-\sqrt{\frac{\rho^2+\mu^2}{2(\rho^2+\mu^2)}}=-\frac{\sqrt{2}}{2}$. Since
$(\lambda_3,  \lambda_4)\in \Delta$, we get that $\angle(N, \mathcal{P}_4)\le \arccos (\min_{(x,y)\in \Delta}f(g(x))) =\frac{3}{4}\pi$.
Together with Corollary \ref{cor:.75pi} this completes the proof.
\end{proof}

Note that he matrix
\[N=\left[\begin{array}{cccc}
            0 & 1 & 0 & 0 \\
            1 & 0 & 0 & 0 \\
            0 & 0 & 0 & 1 \\
            0 & 0 & 1 & 0
          \end{array}
\right]\]
has eigenvalues $1, 1, -1, -1$, and thus by the above argument $\gamma_4=\frac{3}{4}\pi$ is attained also by a pair $(N,P)$, where $P$ is the positive semidefinite part of $N$
and $\rank P=2$.
\medskip

For $n=5$ the result of Theorem \ref{thm:n<5} no longer holds:

\medskip

\begin{expl}\label{expl:n=5}{\rm
Let
\[N=\left[\begin{array}{ccccc}
              0 &  1 & 0 & 0 &  1 \\
               1 & 0 &  1 &0 & 0  \\
              0 &  1 & 0 &  1 & 0 \\
              0 & 0 &  1 & 0 &  1 \\
               1 & 0 & 0 &  1 & 0 \\
            \end{array}
          \right]\]
          be the adjacency matrix of the 5-cycle. Its eigenvalues are well known (they are easily computed by the formula for the eigenvalues of a
          circulant matrix): the simple eigenvalue 2, the positive eigenvalue $2\cos(2\pi/5)=\frac{-1+\sqrt{5}}{2}$ of multiplicity 2, and the negative
          eigenvalue $-2\cos(\pi/5)=\frac{-1-\sqrt{5}}{2}$  of multiplicity 2. Thus the negative definite part $P$ of $N$ satisfies:
\[\cos\angle(P,N)=-\frac{\sqrt{8\cos^2(\pi/5)}}{\sqrt{4+8\cos^2(2\pi/5)+8\cos^2(\pi/5)}}= -\frac{1+1/\sqrt{5}}{2}  <-\frac{\sqrt{2}}{2},\]
implying that
\[\gamma_5\ge \arccos\left(-\frac{1+1/\sqrt{5}}{2}\right)\approx 0.7575 \pi>\frac{3}{4}\pi.\]
The negative definite part of $N$ is a scalar multiple of
\[P=\left[\begin{array}{ccccc}
              1 & -\cos(\pi/5) & \cos(2\pi/5) & \cos(2\pi/5) & -\cos(\pi/5) \\
              -\cos(\pi/5) & 1 & -\cos(\pi/5) & \cos(2\pi/5) & \cos(2\pi/5)  \\
              \cos(2\pi/5) & -\cos(\pi/5) & 1 & -\cos(\pi/5) & \cos(2\pi/5) \\
              \cos(2\pi/5) & \cos(2\pi/5) & -\cos(\pi/5) & 1 & -\cos(\pi/5) \\
              -\cos(\pi/5) & \cos(2\pi/5) & \cos(2\pi/5) & -\cos(\pi/5) & 1 \\
            \end{array}
          \right].\]
}\end{expl}

Indeed, the kind of argument that we used to prove Theorem \ref{thm:n<5} is no longer sufficient for the determination of $\gamma_{n}$ for $n \geq 5$. Here we present some
considerations which explain the new difficulties which arise in the case $n \geq 5$.

Our proofs for the case $n\le 4$ involved optimization of a convex function of the non-positive eigenvalues of a  matrix $0\ne N\in \mathcal{N}_n$ with zero
diagonal, over a convex set formed by such eigenvalue-tuples. Continuing this line of proof for $n\ge 5$ would require   some information on the
possible sets of eigenvalues of a nonnegative $n\times n$ matrix with a zero diagonal.
It is known that the eigenvalues \begin{equation}\label{eq:eval}\lambda_1\ge \lambda_2 \ge \ldots\ge \lambda_n\end{equation} of a matrix  $0\ne N\in \mathcal{N}_n$ with zero
diagonal satisfy
\begin{equation}\label{eq:sniep} \lambda_1>0,~~\lambda_n\ge -\lambda_1 ~\mbox{ and }~\sum_{i=1}^n\lambda_i=0.\end{equation} 
But for $n\ge 5$ not all sequences satisfying
(\ref{eq:eval}) and (\ref{eq:sniep}) are eigenvalues of some such $N$.
The problem of determining necessary and sufficient conditions for a set of real numbers to be the eigenvalues of some $N\in \mathcal{N}_n$ with
a zero diagonal is part of the Symmetric Inverse Eigenvalue Problem (SNIEP), which is difficult and generally open.
For $n\le 4$ the conditions (\ref{eq:eval}) and (\ref{eq:sniep}) are also sufficient, by results of \cite{Fie74} and \cite{LoewyLondon}. For $n=5$  it is shown in
\cite{Spector} that necessary and sufficient conditions for (\ref{eq:eval}) to be eigenvalues of some $N\in \mathcal{N}_n$ are (\ref{eq:sniep}) together with
\begin{equation}\label{eq:specond} \lambda_2+\lambda_5\ge 0 ~\mbox{ and }~\sum_{i=1}^5 \lambda_i^3\ge 0.
\end{equation}
For $n\ge 6$ the SNIEP is still open even for trace zero matrices.

The solution of the trace-zero SNIEP for $n=5$ demonstrates a second difficulty in applying our approach, even for $n= 5$:
The last condition in (\ref{eq:specond}) is not redundant, and the set of all non-increasing $5$-tuples that are eigenvalues of a nonnegative trace zero matrix is
not convex, complicating the relevant optimization problem. It seems that  a new approach is needed for the computation of $\gamma_n$, $n\ge 5$.

For our purpose, of proving that $\lim_{n\rightarrow \infty} \gamma_n=\infty$, we will show that a judicious choice of a nonnegative matrix $N$ will allow the pair $(N, P)$,
where $P$ is the negative definite part of the nonnegative matrix $N$, to attain ever higher angles. This will be done by taking $N$ as the adjacency matrix of a  strongly
regular graph.

\section{Strongly regular graphs}\label{sec:srg}
Recall first the definition of strongly regular graphs, due originally to Bose, and the famous formula for the eigenvalues of such a graph.

\medskip

\begin{defin}[{\rm{\cite{Bose63}}}] \label{dfn:srg} {\rm
A \emph{strongly regular graph} with parameters $(n,k,a,c)$ is a
$k$-regular graph on $n$ vertices such that any two adjacent
vertices have $a$ common neighbours and any two non-adjacent
vertices have $c$ common neighbours.}
\end{defin}

For instance, observe that the pentagon $C_{5}$ is strongly regular with parameters $(5,2,0,1)$ and that the Petersen graph is strongly regular with parameters $(10,3,0,1)$.

Obviously, not every quadruple of numbers $(a,b,c,d)$ 
is the parameter vector of a strongly regular graph. A number of necessary conditions are known and may be found
in \cite[Chapter 10]{AGT2}. We will only mention the simplest one, by way of illustration:
\begin{equation}(n-k-1)c=k(k-a-1). \label{eq:srgparameters}\end{equation}
The proof is an easy exercise in double counting.

The crucial fact for us here is that the eigenvalues of the adjacency matrix of a strongly regular graphs and their multiplicities depend only on the parameters (as there may
often be many non-isomorphic graphs sharing the same parameters):
\medskip

\begin{thm}[{{\cite[Section 10.2]{AGT2}}}] \label{thm:seidel}
    Let $G$ be a connected strongly regular graph with parameters
    $(n,k,a,c)$ and let $\Delta=(a-c)^{2}+4(k-c)$.
 The eigenvalues of the adjacency matrix $A(G)$ are:
		\begin{itemize}
		\item
		$k$, with multiplicity 1.
		\item
    $\theta=\frac{(a-c)+\sqrt{\Delta}}{2}$, with multiplicity
    $m_\theta=\frac{1}{2}\left((n-1)-\frac{2k+(n-1)(a-c)}{\sqrt{\Delta}}\right)$.
    \item
		$\tau=\frac{(a-c)-\sqrt{\Delta}}{2}$, with multiplicity
    $m_\tau=\frac{1}{2}\left((n-1)+\frac{2k+(n-1)(a-c)}{\sqrt{\Delta}}\right)$.
		\end{itemize}
		\end{thm}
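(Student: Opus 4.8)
The plan is to encode the combinatorial conditions defining a strongly regular graph as a single quadratic matrix identity and then read off the spectrum from it. Writing $A=A(G)$ and letting $J$ be the all-ones matrix, I would first argue by double counting that the $(i,j)$ entry of $A^2$ counts walks of length $2$ from $i$ to $j$: this equals $k$ when $i=j$ (the common degree), $a$ when $i\ne j$ are adjacent, and $c$ when $i\ne j$ are non-adjacent. Since $J-I-A$ is exactly the adjacency matrix of the complement, collecting the three cases gives
\[A^2 = kI + aA + c(J-I-A),\]
which rearranges to
\[A^2 - (a-c)A - (k-c)I = cJ.\]

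Next I would locate the eigenvalues. As $G$ is $k$-regular, the all-ones vector $\mathbf 1$ satisfies $A\mathbf 1 = k\mathbf 1$, so $k$ is an eigenvalue; because $G$ is connected, the Perron--Frobenius theorem forces this eigenvalue to be \emph{simple}, which accounts for the multiplicity $1$. For any eigenvector $v$ orthogonal to $\mathbf 1$ we have $Jv=0$, so applying the identity above to $v$ and using $Av=\lambda v$ collapses it to the scalar equation
\[\lambda^2 - (a-c)\lambda - (k-c) = 0.\]
Its two roots are precisely $\theta$ and $\tau$, namely $\frac{(a-c)\pm\sqrt{\Delta}}{2}$ with $\Delta=(a-c)^2+4(k-c)$. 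Since the whole orthogonal complement $\mathbf 1^\perp$ is $A$-invariant and every eigenvector other than $\mathbf 1$ lies in it, these are the only remaining eigenvalues.

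Finally I would pin down the multiplicities $m_\theta,m_\tau$ from two linear constraints: counting dimensions gives $m_\theta+m_\tau=n-1$, and since $A$ has zero diagonal its trace vanishes, so $k+m_\theta\theta+m_\tau\tau=0$. Solving this $2\times 2$ system and substituting $\theta-\tau=\sqrt{\Delta}$ and $\theta+\tau=a-c$ yields $m_\theta=\frac{-k-(n-1)\tau}{\sqrt{\Delta}}$, which simplifies to the advertised $\frac12\bigl((n-1)-\frac{2k+(n-1)(a-c)}{\sqrt{\Delta}}\bigr)$, and correspondingly for $m_\tau$. I do not expect a genuine obstacle here, since this is a classical computation: the only points demanding care are the appeal to connectedness (via Perron--Frobenius) to guarantee that $k$ is simple rather than merely present, and the bookkeeping in the final simplification of $m_\theta$. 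One should also note that the stated formulas presuppose $\Delta>0$, equivalently $\theta\ne\tau$, which holds for any connected strongly regular graph that is not complete.
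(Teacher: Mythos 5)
Your proof is correct, and since the paper states this theorem without proof (citing Godsil--Royle, Section 10.2), your argument is precisely the classical one found in that reference: derive the identity $A^2=kI+aA+c(J-I-A)$ by counting walks of length two, use regularity plus Perron--Frobenius for the simple eigenvalue $k$, restrict to $\mathbf{1}^{\perp}$ to reduce to the quadratic $\lambda^2-(a-c)\lambda-(k-c)=0$, and solve the trace and dimension equations for the multiplicities. All the computations check out, including the caveat that $\Delta>0$ (so $\theta\ne\tau$) for a connected non-complete strongly regular graph.
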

Note that $m_\theta$ and $m_\tau$ have to be integers, and this is another necessary condition
 the parameters $(n,k,a,c)$ have to satisfy.
		
Let us now take $N$ to be the adjacency matrix of a strongly regular graph, and let be $P$ the negative definite part of $N$. Equation \eqref{eq:aa} takes on the following form
then:
\begin{equation}\label{eq:srg}
\frac{\left< N,P \right>}{||N||\cdot ||P||}=-\sqrt{\frac{m_{\tau}\tau^{2}}{nk}}.
\end{equation}

To complete the proof of Theorem \ref{thm:main} we would now like to exhibit a family of strongly regular graphs $\{G_{n_{k}}\}$ for which the expressions of \eqref{eq:srg} tend
to $-1$ as $n_{k} \rightarrow \infty$.


\section{Generalized quadrangles}\label{sec:GQ}

\medskip
		
\begin{defin}
A \emph{generalized quadrangle} is a finite incidence structure $(\Pi,L)$ with sets $\Pi$ of points and $L$ of lines, such that:
\begin{itemize}
\item
Two lines meet in at most one point.
\item
If $u$ is a point not on line $m$, then there are a unique point $v$ on $m$ and a unique line $\ell $ such that $u$ and $v$ are on $\ell $.
\end{itemize}
\end{defin}

For basic facts about generalized quadrangles we refer to \cite[Chapter 6]{Batten}. The advanced theory is laid out in \cite{Quadrangles}. Our definition here followed \cite[p.
129]{Spectra_BH}.
		
If the generalized quadrangle $Q$ has the further property that every line is on $s+1$ points and every point is on $t+1$ lines, then we say that $Q$ is \emph{of order $(s,t)$}
and denote it by $GQ(s,t)$. By \cite[Theorem 6.1.1]{Batten} all generalized quadrangles are either of this form or isomorphic to a grid or to a dual of a grid.

It is not known what are all the pairs $(s,t)$ for which a generalized quadrangle $G(s,t)$ exists. But
the so-called ``classical" constructions of generalized quadrangles, originally due to Tits,  yields specimens of $GQ(s,1)$, $GQ(s,s)$ and $GQ(s,s^{2})$ whenever $s=q$ is a prime
power. (cf. \cite[p. 118]{Batten} and \cite[pp. 130-131]{Spectra_BH} for descriptions of these constructions.)

We need to introduce one final concept.
The \emph{collinearity graph} $C_{Q}$ of a generalized quadrangle $Q=(\Pi,L)$ has $\Pi$ for its vertex set and $u,v \in \Pi$ are adjacent in $C_{Q}$ if and only if $u$ and $v$
lie on a line in $Q$.

\medskip

\begin{thm}[{{\cite[Theorem 9.6.2]{Spectra_BH}}}]\label{thm:graph}
Let $Q$ be a generalized quadrangle of order $(s,t)$ and let $C_{Q}$ be its collinearity graph. Then $C_{Q}$ is strongly regular with parameters
$(n,k,a,c)=((s+1)(st+1),s(t+1),s-1,t+1)$ and its spectrum is:
\begin{itemize}
\item
$k=s(t+1)$ with multiplicity $1$.
\item
$\theta=s-1$ with multiplicity $m_{\theta}=st(s+1)(t+1)/(s+t)$.
\item
$\tau=-(t+1)$ with multiplicity $m_{\tau}=s^{2}(st+1)/(s+t)$.
\end{itemize}
\end{thm}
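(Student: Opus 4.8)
The plan is to verify \emph{directly}, by double counting in the incidence structure, that $C_Q$ is strongly regular with the claimed parameters $(n,k,a,c)$, and only then to read off the spectrum by substituting those parameters into Theorem~\ref{thm:seidel}. The combinatorial verification is where the real content lies; once $(n,k,a,c)$ are in hand the eigenvalues and multiplicities follow mechanically.

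First I would record the two elementary consequences of the axioms that drive every count. Since two distinct lines meet in at most one point, two distinct points lie on \emph{at most one} common line; and the second defining axiom says precisely that for a point $u$ off a line $m$ there is a \emph{unique} point of $m$ collinear with $u$. With these in hand the three parameters fall out of short counts. For the degree: fix a point $u$; its $t+1$ lines each carry $s$ further points, and points on distinct lines through $u$ are distinct (two such lines meet only in $u$), so $u$ has exactly $k=s(t+1)$ neighbours. For $a$: if $u,w$ are adjacent they lie on a unique common line $\ell$, whose remaining $s-1$ points are common neighbours, while any common neighbour off $\ell$ would be collinear with the two distinct points $u,w$ of $\ell$, violating the uniqueness in the second axiom; hence $a=s-1$. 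For $c$: if $u,w$ are non-adjacent, each of the $t+1$ lines through $w$ misses $u$ and therefore, by the axiom, meets the neighbourhood of $u$ in exactly one point, these points being distinct for distinct lines through $w$; hence $c=t+1$ (and in particular $c>0$ shows $C_Q$ is connected of diameter $2$, as Theorem~\ref{thm:seidel} requires).

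Having $k,a,c$, I would pin down $n$ from the standard identity $(n-k-1)c=k(k-a-1)$ of \eqref{eq:srgparameters}: here $k-a-1=st$, so $n-k-1=s^2t$ and $n=(s+1)(st+1)$. It then remains only to feed $(n,k,a,c)=\bigl((s+1)(st+1),\,s(t+1),\,s-1,\,t+1\bigr)$ into Theorem~\ref{thm:seidel}. The pleasant simplification to check is that the discriminant collapses, $\Delta=(a-c)^2+4(k-c)=(s-t-2)^2+4(s-1)(t+1)=(s+t)^2$, whence $\theta=s-1$ and $\tau=-(t+1)$. The multiplicities can be taken straight from the formulas in Theorem~\ref{thm:seidel}, or—more transparently—obtained by solving the linear system $m_\theta+m_\tau=n-1$ together with the zero-trace relation $k+m_\theta\theta+m_\tau\tau=0$, giving $m_\tau=s^2(st+1)/(s+t)$ and $m_\theta=st(s+1)(t+1)/(s+t)$.

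I expect the only genuine obstacle to be the casework for $a$ and $c$: one must invoke the GQ axiom in exactly the right direction and take care that the common neighbours produced are both distinct and exhaustive. By contrast, the determination of $n$ and the entire spectral computation are routine once strong regularity has been established.
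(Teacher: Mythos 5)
Your proof is correct, and all the computations check out: the three counts for $k$, $a$, $c$ use the GQ axioms exactly as needed (including the distinctness and exhaustiveness of the common neighbours, and $c=t+1>0$ giving the connectedness required by Theorem~\ref{thm:seidel}), the identity \eqref{eq:srgparameters} does yield $n=(s+1)(st+1)$, and $\Delta=(s+t)^2$ with the trace relation gives the stated multiplicities. The paper supplies no proof of its own here—the theorem is quoted from \cite{Spectra_BH}—and your argument is precisely the standard one from that source, fitting the paper's framework (strong regularity established combinatorially, spectrum read off from Theorem~\ref{thm:seidel}), so there is nothing to flag.
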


\section{Piecing everything together}\label{sec:proof}



\begin{proof}[Proof of Theorem \ref{thm:main}]
Let $\{n_{k}\}$ be the sequence of prime powers. For each $q \in \{n_{k}\}$ there exists a classical generalized quadrangle $Q_{k}$ of the $GQ(q,q^{2})$ type. Let $N_{k}$ be the
adjacency matrix of $C_{Q_{k}}$ and let $P_{k}$ be the projection of $(-N_{k})$ on $\mathcal{P}_{n}$. Then the angle between $N_{k}$ and $P_{k}$ can be calculated with the help
of Theorem \ref{thm:graph} and \eqref{eq:srg}: its cosine is
\begin{equation}\label{eq:monty}
-\sqrt{\frac{m_{\tau}\tau^{2}}{nk}}=-\sqrt{\frac{s(t+1)}{(s+1)(s+t)}}= -\frac{\sqrt{ q^{2}+1 }}{ q+1  }
\end{equation}
and this leads to
$$
\angle(N_{k},P_{k})=\arccos\left(-\frac{\sqrt{ q^{2}+1 }}{ q+1  } \right)  \underset{q \rightarrow \infty}{\longrightarrow} \arccos{(-1)}=\pi.
$$
Since
\[\pi>\theta_{\max}(\mathcal{C}_{n_k})\ge\gamma_{n_k}\ge \angle(N_{k},P_{k})   \mbox{ for every } k, \]
this implies $\lim_{k\rightarrow\infty} \theta_{\max}(\mathcal{C}_{n_k})=\lim_{k\rightarrow\infty}\gamma_{n_k}=\pi$,
and by the monotonicity of the sequences $\{\theta_{\max}(\mathcal{C}_{n})\}$ and $\{\gamma_n\}$ the result follows.
\end{proof}

Note that we did not actually find the value of $\gamma_n$ for every $n$, which is why we refer to our result as the asymptotic solution of the Hiriart-Urruty and Seeger
problem.

To get a feel for the sequence of angles $\{\angle(N_{k},P_{k})\}$, we list here the first five
elements in the sequence. The first five prime  powers (our $q$'s) are 2, 3, 4 , 5 and 7. The first five orders of the matrix pairs we generate are:
$n_1=27$, $n_2=112$, $n_3=325$, $n_4=756$ and $n_5=2752$ ($n=(q+1)(q^3+1)$). Table 1 shows the  lower bounds on $\gamma_n$ (and thus on
$\theta_{\max}(\mathcal{C}_n)$) for these orders,  computed using (\ref{eq:monty}).

\noindent \setlength{\tabcolsep}{4pt}
\begin{tabular}{c|c|c|c|c}
    $n=27$ & $n=112$ & $n=325$ &$n=756$ & $n=2752$\\ \hline
        \!$\arccos\left(-\frac{\sqrt{5}}{3}\right)$\! &\! $\arccos\left(-\frac{\sqrt{10}}{4}\right)$ \! &
     \! $\arccos\left(-\frac{\sqrt{17}}{5}\right) $ \!&\! $\arccos\left(-\frac{\sqrt{26}}{6}\right) $\! &\! $\arccos\left(-\frac{\sqrt{50}}{8}\right)$\!\\
  $\approx 0.7677\pi$  & $\approx 0.7902\pi$ & $\approx 0.8086\pi$ & $\approx 0.8232\pi$ & $\approx 0.8451\pi$\\
\multicolumn{5}{c}{}\\
 \multicolumn{5}{c}{Table 1: Lower bounds on $\gamma_n$  and $\theta_{\max}(\mathcal{C}_n)$}
\end{tabular}

\section{A few remarks}\label{sec:final}
\begin{enumerate}
\item Theorem \ref{thm:main} implies  that for large $n$  there exist a nonnegative matrix and a positive semidefinite  matrix that are almost opposite, and
the cones $\mathcal{P}_{n}+\mathcal{N}_{n}$ and $\mathcal{C}_n$ are ``barely pointed".

\item We do not know whether the pair $(N_k, P_k)$ constructed is actually antipodal in either $\mathcal{C}_{n_k}$ or $\mathcal{P}_{n_k}+\mathcal{N}_{n_k}$. However, it is not
    hard to check that this pair satisfies the weaker property of being a \emph{critical pair} in $\mathcal{P}_{n_k}+\mathcal{N}_{n_k}$, as defined in \cite[Definition
    6.11]{HirSee10}.
     Any antipodal pair is critical but not all critical pairs are antipodal. It is not obvious that this pair is a critical pair for $\mathcal{C}_{n_k}$.

\medskip

\begin{question}
Is $\theta_{\max}(\mathcal{C}_{n})=\gamma_{n}$?
In other words, is the maximal angle in $\mathcal{C}_{n}$ always achieved by a nonnegative matrix and a positive semidefinite matrix?
\end{question}
\medskip
In fact, we do not even know the answer to the following, ostensibly simpler, question:

\begin{question}
Is $\theta_{\max}(\mathcal{P}_{n}+\mathcal{N}_n)=\gamma_{n}$?
\end{question}
\medskip
This is true for $n=2$ by the results of \cite{HirSee10}.

\item
Hiriart-Urruty and Seeger \cite[Proposition 6.15]{HirSee10} found that the (unique up to multiplication by a positive scalar) pair of antipodal matrices in $\mathcal{C}_{2}$
is:
$$
X=\frac{1}{2}\left(\begin{array}{cc}
1&-1\\
-1&1
\end{array}\right), Y=\frac{\sqrt{2}}{2}\left(\begin{array}{cc}
0&1\\
1&0
\end{array}\right).
$$
This example is in fact a special case of our construction: $Y$ can be thought of as the normalized adjacency matrix of the complete graph $K_{2}$ and $X$ is the negative
definite part of $Y$.
The right-hand side of \eqref{eq:aa} equals $-\frac{\sqrt{2}}{2}$ in this case, as can be easily verified.

We observe  that pairs of matrices that yield $-\frac{\sqrt{2}}{2}$ in \eqref{eq:aa}, and thus an angle of $\frac{3 }{4}\pi$, can be easily constructed for every order by
taking $N$ as the adjacency matrix of a bipartite graph, by the Coulson-Rushbrooke Theorem on the symmetry of their spectra (cf. \cite[p. 11]{AGT1}).

Another kind of pair which
attains the angle  $\frac{3 }{4}\pi$  can be constructed for a prime power $q$ by taking $n=(q+1)(q^2+1)$ and letting $N$ be the adjacency matrix of $C_{GQ(q,q)}$, which is
clearly not a bipartite graph.


\end{enumerate}

\section{Acknowledgements}
We would like to thank the referee for helpful comments and especially for urging us to investigate $\gamma_{n}$ beyond what has been done in the original version of the paper, thus leading us to the discovery of Theorem \ref{thm:n<5}.

\bibliography{nuim,srg}
\end{document}